\tikzset{cross/.style={cross out, draw=black, minimum size=2*(#1-\pgflinewidth), inner sep=0pt, outer sep=0pt},
%default radius will be 1pt. 
cross/.default={1pt}}
\newtheorem{thm}{Theorem}[section]
\newtheorem{lem}[thm]{Lemma}
\newtheorem{cor}[thm]{Corollary}
\newtheorem{prop}[thm]{Proposition}
\theoremstyle{definition}
\newtheorem{defi}[thm]{Definition}
\theoremstyle{remark}
\newtheorem{rem}[thm]{Remark}
\numberwithin{equation}{section}
\definecolor{esperance}{rgb}{0.0,0.5,0.0}
\newcommand{\de}{\delta}
\newcommand{\rk}{\mathrm{rk\,}}
\DeclareMathOperator{\diam}{diam}
\newcommand{\Bad}{\mb{Bad}}
\newcommand{\al}{\alpha}
\newcommand{\ga}{\gamma}
\newcommand{\del}{\delta}
\newcommand{\Del}{\Delta}
\newcommand{\lam}{\lambda}
\newcommand{\eps}{\epsilon}
\newcommand{\ka}{\kappa}
\newcommand{\cH}{\mathcal{H}}
\newcommand{\cR}{\mathcal{R}}
\newcommand{\bR}{\mathbb{R}}
\newcommand{\bZ}{\mathbb{Z}}
\newcommand{\bQ}{\mathbb{Q}}
\newcommand{\bT}{\mathbb{T}}
\newcommand\wh[1]{\widehat{#1}}
\newcommand\mb[1]{\mathbf{#1}}
\newcommand\tb[1]{\textbf{#1}}
\newcommand{\onto}{\xymatrix{\ar@{>>}[r]&}}
\newcommand{\eq}[1]
{
\begin{equation*}
{#1}
\end{equation*}
}
\newcommand{\eqlabel}[2]
{
\begin{equation}
{#2}\label{#1}
\end{equation}
}
\newcommand*{\rom}[1]{\expandafter\@slowromancap\romannumeral #1@}
\begin{document}

\title{On a Kurzweil type theorem via ubiquity}

\date{}

\author{Taehyeong Kim}
\address{The Einstein Institute of Mathematics, Edmond J. Safra Campus, Givat Ram\\
The Hebrew University of Jerusalem, Jerusalem, 91904, Israel}
\email{taehyeong.kim@mail.huji.ac.il}

% \thanks will become a 1st page footnote.
\thanks{}

%\author{}

\keywords{}

\def\thefootnote{}
\footnote{2020 {\it Mathematics
Subject Classification}: Primary 11J20 ; Secondary 11J83, 28A78}   %%d 
\def\thefootnote{\arabic{footnote}}
\setcounter{footnote}{0}

\begin{abstract}
Kurzweil's theorem ('55) is concerned with zero-one laws for well approximable targets in inhomogeneous Diophantine approximation under the badly approximable assumption. In this article, we prove the divergent part of a Kurzweil type theorem via a suitable construction of ubiquitous systems when the badly approximable assumption is relaxed. Moreover, we also discuss some counterparts of Kurzweil's theorem.
\end{abstract}
\maketitle
\section{Introduction}
Kurzweil's theorem \cite{Kur55} in inhomogeneous Diophantine approximation is concerned with well approximable target vectors. We start by introducing related definitions and notations.
Given a decreasing function $\psi:\bR^+ \to \bR^+$ and an $m\times n$ matrix $A\in M_{m,n}(\bR)$, we say that $\mb{b}\in\bR^m$ is \textit{$\psi$-approximable for $A$} if there exist infinitely many solutions $\mb{q}\in\bZ^n$ to the following inequality
\eq{
\|A\mb{q}-\mb{b}\|_{\bZ} < \psi(\|\mb{q}\|).
}
Denote by $W_A(\psi)$ the set of such vectors in the unit cube $[0,1]^m$.
Here and hereafter, $\|\mb{x}\|=\max_{1\leq i\leq m}|x_i|$ and $\|\mb{x}\|_\bZ=\min_{\mb{n}\in\bZ^m} \|\mb{x}-\mb{n}\|$ for $\mb{x}\in\bR^m$. 
We say that $A\in M_{m,n}(\bR)$ is \textit{badly approximable} if 
\[
\liminf_{\|\mb{q}\|\to\infty} \|\mb{q}\|^{\frac{n}{m}}\|A\mb{q}\|_\bZ >0.
\]

Kurzweil proved the following zero-one law for $W_A(\psi)$.
\begin{thm}\cite{Kur55}\label{thm_Kur} If $A\in M_{m,n}(\bR)$ is badly approximable, then for any decreasing $\psi:\bR^+ \to \bR^+$ we have
\eq{
|W_{A}(\psi)|=
  \begin{dcases}
    0   & \quad \text{if } \sum_{q=1}^{\infty} q^{n-1}\psi(q)^m<\infty,\\
    1  & \quad \text{if } \sum_{q=1}^{\infty} q^{n-1}\psi(q)^m=\infty.
  \end{dcases} 
}
Here and hereafter, $|\cdot|$ stands for Lebesgue measure on $\bR^m$.
\end{thm}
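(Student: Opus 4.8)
For the convergence direction no hypothesis on $A$ is needed: for fixed $\mb q\in\bZ^n$ the set $\{\mb b\in[0,1]^m:\|A\mb q-\mb b\|_\bZ<\psi(\|\mb q\|)\}$ lies in a union of at most $2^m$ axis-parallel cubes of side $2\psi(\|\mb q\|)$, hence has measure $\ll\psi(\|\mb q\|)^m$; since $\#\{\mb q\in\bZ^n:\|\mb q\|=q\}\asymp q^{n-1}$, the total mass $\sum_{\mb q\in\bZ^n}\psi(\|\mb q\|)^m\asymp\sum_{q\ge1}q^{n-1}\psi(q)^m$ is finite, and the convergence Borel--Cantelli lemma gives $|W_A(\psi)|=0$.

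For the divergence direction the plan is to realize $W_A(\psi)$ as the $\limsup$ set $\limsup_{\mb q}B(r_{\mb q},\psi(\|\mb q\|))$ inside $\bT^m=[0,1]^m$, where $r_{\mb q}:=A\mb q\bmod\bZ^m$ is the resonant point attached to $\mb q\in\bZ^n$, with weight $\beta(r_{\mb q}):=\|\mb q\|$, and then to apply the divergence half of the ubiquitous systems theorem of Beresnevich--Dickinson--Velani. The input to be supplied is the following \emph{covering property}, which I claim follows from badly approximability of $A$: there are $C=C(A,m,n)>0$ and $Q_0$ such that for every $Q\ge Q_0$ and every $\mb y\in\bR^m$ there is $\mb q\in\bZ^n$ with $\|\mb q\|\le Q$ and $\|A\mb q-\mb y\|_\bZ\le CQ^{-n/m}$ --- equivalently, $\bT^m=\bigcup_{\|\mb q\|\le Q}B(r_{\mb q},\rho(Q))$ for all $Q\ge Q_0$, with $\rho(Q):=CQ^{-n/m}$. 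This exhibits $\{r_{\mb q}\}$ as a globally (hence locally, relative to every ball) ubiquitous system with respect to $\rho$, and $\rho$ is regular since $\rho(2^{j+1})/\rho(2^j)=2^{-n/m}$ is a fixed constant in $(0,1)$.

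The heart of the matter is the covering property, and this is the only place where badly approximability is used; I would deduce it from the Dani correspondence together with Minkowski's second theorem. Encode $A$ by the unimodular lattice $\Lambda=u_A\bZ^{m+n}$ with $u_A=\left(\begin{smallmatrix}I_m&A\\0&I_n\end{smallmatrix}\right)$, and set $g_t=\operatorname{diag}(e^{t/m},\dots,e^{t/m},e^{-t/n},\dots,e^{-t/n})\in\SL_{m+n}(\bR)$ ($m$ entries $e^{t/m}$ followed by $n$ entries $e^{-t/n}$). A standard computation shows that $A$ is badly approximable if and only if there is $c>0$ with $g_t\Lambda\cap(-c,c)^{m+n}=\{0\}$ for all $t\ge0$. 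Fixing such a $c$ and writing $d=m+n$, the sup-norm successive minima $\lambda_1\le\dots\le\lambda_d$ of $g_t\Lambda$ then all satisfy $\lambda_i\ge\lambda_1\ge c$, while $\lambda_1\cdots\lambda_d\le1$ by Minkowski's theorem, so $\lambda_d\le c^{-(d-1)}=:C'$. Hence $g_t\Lambda$ contains $d$ linearly independent vectors of sup-norm $\le C'$, so its sup-norm covering radius is $\le\frac d2C'$, uniformly in $t\ge0$. Given $Q$ large and $\mb y\in\bR^m$, choose $t$ with $e^{t/n}=Q$ (so $e^{t/m}=Q^{n/m}$) and apply this bound to the point $(Q^{n/m}\mb y,\mb 0)\in\bR^d$: one obtains $\mb q\in\bZ^n$ and $\mb p\in\bZ^m$ with $\|Q^{-1}\mb q\|\le\frac d2C'$ and $\|Q^{n/m}(A\mb q-\mb p-\mb y)\|\le\frac d2C'$, that is, $\|\mb q\|\ll Q$ and $\|A\mb q-\mb y\|_\bZ\ll Q^{-n/m}$; absorbing the implied constant into $Q$ gives the covering property.

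Finally, to conclude: by a standard reduction (exploiting that $\psi$ is decreasing and that the harmonic series diverges) one may assume $\psi(q)\le c_0q^{-n/m}$ for all large $q$, with $c_0>0$ a fixed small constant, so that $\psi\le\rho$ --- replacing $\psi$ by $\min\{\psi(q),c_0q^{-n/m}\}$ only shrinks $W_A(\psi)$ and keeps $\sum q^{n-1}\psi(q)^m$ divergent. The divergence half of the ubiquity theorem then applies to $\{r_{\mb q}\}$: its divergence sum for the present data equals $\sum_{j\ge1}\big(\psi(2^j)/\rho(2^j)\big)^m\asymp\sum_{j\ge1}2^{jn}\psi(2^j)^m\asymp\sum_{q\ge1}q^{n-1}\psi(q)^m=\infty$ (the middle comparison using monotonicity of $\psi$), and therefore $|W_A(\psi)|=|[0,1]^m|=1$. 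The one genuine obstacle is the covering property --- turning ``$g_t\Lambda$ avoids a fixed small sup-norm cube for all $t\ge0$'' into ``$g_t\Lambda$ has uniformly bounded covering radius''; the rest is the classical Borel--Cantelli estimate and a routine application of the ubiquity framework, for which one need only check the easy regularity of $\rho$ and record the global ubiquity established above.
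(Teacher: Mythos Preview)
Your approach is sound and follows the same ubiquity-based strategy that the paper uses (the paper does not prove Kurzweil's theorem directly but recovers it from Theorem~\ref{thm_main} with $s=m$, together with Remark~\ref{rem_bad}\eqref{item_bad}). The convergence half is identical in spirit to the paper's Section~\ref{sec2}. For the divergence half there are two points of comparison.

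\medskip
\textbf{Covering property.} You derive the inhomogeneous covering $\bT^m=\bigcup_{\|\mb q\|\le Q}B(r_{\mb q},CQ^{-n/m})$ via the Dani correspondence and Minkowski's second theorem (bounded first minimum forces bounded last minimum, hence bounded covering radius for $g_t\Lambda$). The paper instead invokes Cassels' classical transference principle (Theorem~\ref{thm_trasf}), which converts the gap condition in homogeneous approximation directly into an inhomogeneous covering bound. Both routes are correct; yours is geometric/dynamical, the paper's is purely Diophantine and slightly more elementary. The advantage of the transference-principle route is that it applies verbatim at each individual return time $\ell_i$ in the non-singular (not necessarily badly approximable) setting, which is what the paper actually needs for Theorem~\ref{thm_main}.

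\medskip
\textbf{A gap in the ubiquity step.} Your sentence ``globally (hence locally, relative to every ball) ubiquitous'' hides a genuine issue. The BDV local-ubiquity hypothesis, as the paper states it, requires
\[
\Bigl|B\cap\bigcup_{l_i<\|\mb q\|\le u_i}B(r_{\mb q},\rho(u_i))\Bigr|\ge\kappa|B|
\]
with a \emph{lower} truncation $l_i\to\infty$; this is what guarantees that the resulting $\limsup$ set sits inside $W_A(\psi)$ rather than in a possibly larger set where a single $\mb q$ contributes infinitely often. Your full covering gives only $|B\cap\bigcup_{\|\mb q\|\le u_i}B(r_{\mb q},\rho(u_i))|=|B|$. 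To discard the small-$\mb q$ contribution one must show that the balls with $\|\mb q\|\le l_i=c_1u_i$ occupy at most a fixed fraction of $B$; this is \emph{not} automatic, since the crude global count $(2l_i{+}1)^n\cdot(2\rho(u_i))^m\asymp c_1^n$ bounds the measure absolutely, not relative to $|B|$. The paper closes this via a Weyl-type equidistribution statement (Proposition~\ref{prop_dist}): the number of $r_{\mb q}$ with $\|\mb q\|\le l_i$ that land in $2B$ is $\ll l_i^n|B|$, whence their $\rho(u_i)$-neighbourhoods cover at most $\ll c_1^n|B|$ of $B$, and choosing $c_1$ small yields the required local ubiquity (Theorem~\ref{LUb}). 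This step is short but essential, and it is missing from your outline.
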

We remark that Kurzweil showed that in fact the badly approximable condition is an equivalent condition for the zero-one law, not a sufficient condition.

In this article, we will consider similar results when the badly approximable condition is relaxed.
We say that $A\in M_{m,n}(\bR)$ is \textit{singular} if for any $\eps>0$ for all large enough $X\geq 1$ there exists $\mb{q}\in\bZ^n$ such that
\[
\|A\mb{q}\|_\bZ <\eps X^{-\frac{n}{m}}\quad\text{and}\quad 0<\|\mb{q}\|<X.
\]
Otherwise we call it \textit{non-singular} (or \textit{regular} following \cite{Cas57}).
One can check that $A\in M_{m,n}(\bR)$ is singular if and only if for any $\eps>0$ for all large enough $\ell \in\bZ_{\geq 1}$
there exists $\mb{q}\in\bZ^n$ such that
\eqlabel{eq_sing}{
\|A\mb{q}\|_\bZ <\eps 2^{-\frac{n}{m}\ell}\quad\text{and}\quad 0<\|\mb{q}\|<2^{\ell}.
}
Hence $A\in M_{m,n}(\bR)$ is non-singular if and only if there exists $\eps >0$ such that the set
\[
L(\eps):=\{\ell\in\bZ_{\geq 1}: \text{ there is no solution }\mb{q}\in\bZ^n\text{ to }\eqref{eq_sing}\text{ with }\ell\}
\]
is unbounded.
We call $L(\eps)$ \textit{$\eps$-return sequence for $A$}. 

\begin{rem}\label{rem_bad}\
\begin{enumerate}
\item\label{item_bad} Note that $A\in M_{m,n}(\bR)$ is badly approximable if and only if there exists $\eps>0$ such that $L(\eps) = \bZ_{\geq 1}$.
\item In a dynamical point of view as in \cite{D85}, the set $L(\eps)$ corresponds to return times to a compact set related to $\eps$ of a certain diagonal flow in the space of lattices.
\end{enumerate}
\end{rem}

The following is the main theorem of this article. 
\begin{thm}\label{thm_main} 
Let $A\in M_{m,n}(\bR)$ be non-singular with $\eps$-return sequence $L(\eps)=\{\ell_i\}_{i\geq 1}$. For any decreasing $\psi:\bR^+ \to \bR^+$ and $0\leq s\leq m$, the $s$-dimensional Hausdorff measure of $W_{A}(\psi)$ is given by
\[
\cH^s(W_{A}(\psi))=\cH^s([0,1]^m)\quad\text{if}\quad\sum_{i=1}^{\infty} 2^{\ell_i n}\psi(2^{\ell_i})^s=\infty
\]
\end{thm}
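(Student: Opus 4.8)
The plan is to realise $W_A(\psi)$ as a $\limsup$ set of balls about the inhomogeneous resonant points $\{A\mb q\bmod\bZ^m:\mb q\in\bZ^n\}$ and to show that these points form a \emph{ubiquitous system} along the scales $2^{\ell_i}$, whence the conclusion follows from the Hausdorff-measure criterion for $\limsup$ sets of ubiquitous systems (Beresnevich--Dickinson--Velani). Write $R_{\mb q}$ for the point $A\mb q\bmod\bZ^m\in\bR^m/\bZ^m$, weighted by $\|\mb q\|$. Then
\[
W_A(\psi)=\Big\{\mb b\in[0,1]^m:\ \mb b\in B\big(R_{\mb q},\psi(\|\mb q\|)\big)\ \text{for infinitely many}\ \mb q\in\bZ^n\Big\},
\]
and, since $\psi$ is decreasing, $\psi(\|\mb q\|)\ge\psi(N)$ whenever $\|\mb q\|\le N$. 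So it suffices to establish a \emph{local ubiquity} estimate: there should exist a ubiquity function $\rho$, a constant $\kappa>0$, and scales $N_i\to\infty$ with $N_i\asymp 2^{\ell_i}$ and $\rho(N_i)\asymp 2^{-\frac{n}{m}\ell_i}$ such that for every ball $B\subseteq[0,1]^m$ and all large $i$,
\[
\Big|B\cap\bigcup_{0<\|\mb q\|\le N_i}B\big(R_{\mb q},\rho(N_i)\big)\Big|\ \ge\ \kappa\,|B|.
\]
The ubiquity machinery then yields $\cH^s\big(W_A(\psi)\big)=\cH^s\big([0,1]^m\big)$ as soon as $\sum_i\rho(N_i)^{-m}\psi(N_i)^s=\infty$, and since $\rho(N_i)^{-m}\asymp 2^{\ell_i n}$ this should reduce to the hypothesis $\sum_i 2^{\ell_i n}\psi(2^{\ell_i})^s=\infty$. (If $\psi\not\to0$ the conclusion is trivial, so assume $\psi(t)\to 0$.)

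The heart of the argument is the ubiquity estimate, which I would extract from the Dani correspondence. Put $\Lambda_A=\smallmat{I_n&0\\ A&I_m}\bZ^{n+m}$, a unimodular lattice in $\bR^{n+m}$, and $g_X=\diag{X^{-1}I_n,\ X^{n/m}I_m}\in\SL_{n+m}(\bR)$. Unwinding definitions, if $\ell\in L(\eps)$ (for $\ell$ large) then $g_{2^\ell}\Lambda_A$ has no nonzero vector of norm $<\eps$, so it lies in the Mahler compact set $K_\eps=\{\Lambda:\lambda_1(\Lambda)\ge\eps\}$. By Minkowski's second theorem a unimodular lattice in $K_\eps$ has all its successive minima bounded above by some $C_\eps=C(\eps,m,n)$, hence possesses $n+m$ linearly independent vectors of norm $\le C_\eps$ and has covering radius $\ll_\eps 1$. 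Applying the covering property to the point $(\mb 0,\ 2^{\frac{n}{m}\ell}\mb b)$ for $\mb b\in[0,1]^m$ produces a lattice point $\big(2^{-\ell}\mb q,\ 2^{\frac{n}{m}\ell}(A\mb q+\mb p)\big)$ within distance $\ll_\eps 1$, i.e.\ $\mb q\in\bZ^n$ with $\|\mb q\|\ll_\eps 2^\ell$ and $\|A\mb q-\mb b\|_\bZ\ll_\eps 2^{-\frac{n}{m}\ell}$; discarding the null set of $\mb b$ lying within $O_\eps(2^{-\frac{n}{m}\ell})$ of $\bZ^m$ allows one to take $\mb q\ne\mb 0$. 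This gives, for each $\ell\in L(\eps)$, that the points $\{R_{\mb q}:0<\|\mb q\|\le C_\eps 2^\ell\}$ are $\big(C_\eps 2^{-\frac{n}{m}\ell}\big)$-dense in $\bR^m/\bZ^m$, which is the desired local ubiquity along the scales $N_i\asymp 2^{\ell_i}$. (Alternatively, one may quote the classical Khintchine-type transference inequality between homogeneous and inhomogeneous approximation in place of this lattice-point argument.)

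It then remains to feed this into the ubiquitous-systems theorem: $[0,1]^m$ with Lebesgue measure is $m$-Ahlfors regular, the resonant points $R_{\mb q}$ with weights $\|\mb q\|$ and ubiquity function $\rho$ satisfy local ubiquity along $\{N_i\}$, and the theorem delivers $\cH^s(W_A(\psi))=\cH^s([0,1]^m)$ provided $\sum_i\rho(N_i)^{-m}\psi(N_i)^s=\infty$; for $s<m$ one may equivalently first treat the Lebesgue case $s=m$ and then invoke the Mass Transference Principle (applied to $\psi^{s/m}$), which reproduces precisely this divergence condition. The step I expect to be the real obstacle is pinning down the ubiquity estimate at the \emph{sharp} scale: the covering argument above naturally delivers a weight cut-off comparable to $C_\eps 2^{\ell_i}$ and a density radius comparable to $C_\eps 2^{-\frac{n}{m}\ell_i}$, and since $\psi$ is only assumed decreasing one must control how the constant $C_\eps$ propagates so that the resulting divergence sum is genuinely equivalent to $\sum_i 2^{\ell_i n}\psi(2^{\ell_i})^s$ rather than to $\sum_i 2^{\ell_i n}\psi(C_\eps 2^{\ell_i})^s$. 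Handling this --- together with checking (or circumventing, since $L(\eps)$ may be sparse) the regularity hypotheses that the ubiquity theorem imposes on the scale sequence $\{N_i\}$ and on $\rho$ --- is where the technical weight of the proof is concentrated.
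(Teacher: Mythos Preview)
Your overall plan (ubiquity plus an inhomogeneous transference/covering argument) matches the paper's, but there is a genuine gap in what you propose to verify. The form of local ubiquity required by the BDV machinery, and used in the paper, is \emph{two-sided}: for some lower sequence $l_i\to\infty$ with $l_i<u_i$ one needs
\[
\Big|\,B\cap\bigcup_{l_i<\|\mb{q}\|\le u_i} B(A\mb{q},\rho(u_i))\,\Big|\ \ge\ \kappa\,|B|
\]
for every ball $B$ and all large $i$. The lower cut-off is not cosmetic: it is what forces the Cantor-type construction behind the ubiquity theorem to manufacture \emph{infinitely many distinct} $\mb{q}$'s and hence a genuine $\limsup$ set. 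Your Dani/transference argument yields only the one-sided covering statement ($0<\|\mb{q}\|\le u_i$), indeed with $\kappa=1$; to pass to the two-sided form you must show that the small-weight balls ($\|\mb{q}\|\le l_i$) cannot already cover too much of $B$. The paper does this via a separate Weyl-type equidistribution step: for every ball $B\subset\bT^m$ one has $\#\{A\mb{q}\in B:\|\mb{q}\|\le N\}\sim |B|\cdot\#\{\|\mb{q}\|\le N\}$ as $N\to\infty$. Taking $l_i=c_1 u_i$ this gives $\big|B\cap\bigcup_{\|\mb{q}\|\le l_i}B(A\mb{q},\rho(u_i))\big|\ll c_1^n\,|B|$, and one chooses $c_1$ small. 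The trivial count $\#\{\|\mb{q}\|\le l_i\}\asymp l_i^n$ would only yield a bound independent of $|B|$, useless once $B$ is small; so this equidistribution input is essential and is precisely the ingredient missing from your sketch.

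By contrast, the two obstacles you flag are comparatively minor. The $u$-regularity of $\rho(r)=c\,r^{-n/m}$ is immediate from $\ell_{i+1}\ge\ell_i+1$, so sparseness of $L(\eps)$ only helps here. Your concern about the constant $C_\eps$---that ubiquity naturally delivers divergence of $\sum_i 2^{\ell_i n}\psi(C_\eps 2^{\ell_i})^s$ rather than of $\sum_i 2^{\ell_i n}\psi(2^{\ell_i})^s$, and that these need not coincide for a merely decreasing $\psi$ when $L(\eps)$ is sparse---is a legitimate subtlety (the paper asserts the two sums ``coincide'' without further comment), but it is not where the structural work lies.
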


For $\del>0$, let $\psi_\del (q) = \del q^{-\frac{n}{m}}$. Denote $\Bad_A(\del) = [0,1]^m\setminus W_A(\psi_\del)$ and $\Bad_A = \bigcup_{\del>0}\Bad_A(\del)$. 
Theorem \ref{thm_main} with $\psi=\psi_\del$ and $s=m$ directly implies the following corollary.
\begin{cor}\label{cor_bad}
If $A\in M_{m,n}(\bR)$ is non-singular, then for any $\del>0$, the set $\Bad_A(\del)$ has Lebesgue measure zero, hence, $\Bad_A$ has Lebesgue measure zero.
\end{cor}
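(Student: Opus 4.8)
The plan is to obtain Corollary \ref{cor_bad} as an immediate specialization of Theorem \ref{thm_main} to the functions $\psi_\del(q) = \del q^{-n/m}$ with exponent $s = m$. The first thing I would pin down is that the hypothesis of Theorem \ref{thm_main} is genuinely available here: since $A$ is non-singular, by definition there is some $\eps > 0$ for which $L(\eps)$ is unbounded, and being a subset of $\bZ_{\geq 1}$ it is then an infinite strictly increasing sequence $L(\eps) = \{\ell_i\}_{i \geq 1}$. I fix such an $\eps$ once and for all.

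Next I would feed $\psi = \psi_\del$ and $s = m$ into the divergence condition of Theorem \ref{thm_main}. The point is the elementary identity
\[
\sum_{i=1}^{\infty} 2^{\ell_i n}\, \psi_\del(2^{\ell_i})^m \;=\; \sum_{i=1}^{\infty} 2^{\ell_i n} \bigl( \del\, 2^{-\frac{n}{m}\ell_i} \bigr)^m \;=\; \del^m \sum_{i=1}^{\infty} 1 \;=\; \infty,
\]
which holds precisely because $\{\ell_i\}_{i\ge 1}$ is infinite. Hence Theorem \ref{thm_main} gives $\cH^m\bigl(W_A(\psi_\del)\bigr) = \cH^m([0,1]^m)$. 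Since on $\bR^m$ the Hausdorff measure $\cH^m$ is a positive constant multiple of Lebesgue measure $|\cdot|$ and $\cH^m([0,1]^m) < \infty$, the inclusion $W_A(\psi_\del) \subseteq [0,1]^m$ together with this equality forces $\cH^m\bigl([0,1]^m \setminus W_A(\psi_\del)\bigr) = 0$, and therefore $|\Bad_A(\del)| = \bigl| [0,1]^m \setminus W_A(\psi_\del) \bigr| = 0$ for every $\del > 0$.

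It remains to handle $\Bad_A = \bigcup_{\del > 0} \Bad_A(\del)$, which a priori is an uncountable union. Here I would invoke monotonicity: if $\del_1 \le \del_2$ then $\psi_{\del_1} \le \psi_{\del_2}$ pointwise, so every solution counted for $\psi_{\del_1}$ is also one for $\psi_{\del_2}$, giving $W_A(\psi_{\del_1}) \subseteq W_A(\psi_{\del_2})$ and hence $\Bad_A(\del_1) \supseteq \Bad_A(\del_2)$. Consequently $\Bad_A = \bigcup_{k \geq 1} \Bad_A(1/k)$ is a countable union of Lebesgue-null sets, so it is null as well. I do not expect any real obstacle in this argument — all the substance sits inside Theorem \ref{thm_main}; one only needs to be attentive to two small points, namely that non-singularity is exactly what makes the series above diverge, and that the definition of $\Bad_A$ as an uncountable union can be replaced by a countable one thanks to monotonicity in $\del$.
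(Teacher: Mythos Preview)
Your proposal is correct and is exactly the approach the paper takes: the paper simply states that Theorem \ref{thm_main} with $\psi=\psi_\del$ and $s=m$ directly implies the corollary, and you have faithfully spelled out the routine verification (the divergence computation, the passage from $\cH^m$ to Lebesgue measure, and the reduction of $\Bad_A$ to a countable union via monotonicity in $\del$).
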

\begin{rem}\
\begin{enumerate}
\item There are some historical remarks about Corollary \ref{cor_bad}. The one-dimensional result of the corollary was proved in \cite{Kim07} using irrational rotations and the Ostrowski representation. The corollary in full generality was proved in \cite{Sha13} using a certain mixing property in homogeneous dynamics. Simultaneous version (i.e. $n=1$) of the corollary was proved in \cite{Mos} using a certain well distributed property. Our method relies on a suitable construction of a ubiquitous system. 
\item A zero-one law for Lebesgue measure of $W_A(\psi)$ in one-dimensional case was investigated in \cite{FK16}. According to their results, Theorem \ref{thm_main} is not optimal. It seems very interesting to obtain zero-one laws for $W_A(\psi)$ in multidimensional case.
\item A weighted version of Kurzweil's theorem was investigated in \cite{Har12}. There have been several recent results on the weighted ubiquity and weighted transference theorems (see \cite{CGGMS20}, \cite{G20}, and \cite{WW21}). It seems plausible to utilize these results to obtain a weighted version of Theorem \ref{thm_main}.
\end{enumerate}
\end{rem}

As stated in \cite[Section 9]{BBDV09}, using Theorem \ref{thm_Kur} and Mass Transference Principle in \cite{BV06}, we are able to deduce Hausdorff measure version of Kurzweil's theorem. Theorem \ref{thm_main} which relies on a ubiquity method also implies the following corollary.
\begin{cor}\label{cor_zero-one} If $A\in M_{m,n}(\bR)$ is badly approximable, then for any decreasing $\psi:\bR^+ \to \bR^+$ and $0\leq s\leq m$, we have
\eq{
\cH^s(W_{A}(\psi))=
  \begin{dcases}
    0   & \quad \text{if } \sum_{q=1}^{\infty} q^{n-1}\psi(q)^s<\infty,\\
    \cH^s([0,1]^m)  & \quad \text{if } \sum_{q=1}^{\infty} q^{n-1}\psi(q)^s=\infty.
  \end{dcases} 
} Moreover, the convergent part holds for every $A\in M_{m,n}(\bR)$.
\end{cor}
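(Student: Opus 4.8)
The plan is to deduce the divergence part from Theorem \ref{thm_main} and the convergence part from a standard Borel--Cantelli / Hausdorff--Cantelli argument. First, for the divergence part, suppose $A$ is badly approximable. By Remark \ref{rem_bad}\eqref{item_bad}, there is an $\eps>0$ with $L(\eps)=\bZ_{\geq 1}$, so the $\eps$-return sequence is $\ell_i=i$. I would then show that the sum condition appearing in Theorem \ref{thm_main}, namely $\sum_{i\geq 1} 2^{in}\psi(2^i)^s=\infty$, is equivalent to the sum condition in the corollary, $\sum_{q\geq 1} q^{n-1}\psi(q)^s=\infty$. This is the familiar Cauchy condensation comparison: since $\psi$ is decreasing, $\psi(q)^s$ is decreasing, and for $q\in[2^i,2^{i+1})$ one has $q^{n-1}\psi(q)^s$ comparable (up to a factor $2^{n-1}$) to a single dyadic block sum, so $\sum_q q^{n-1}\psi(q)^s \asymp \sum_i 2^{i(n-1)}\cdot 2^i\,\psi(2^i)^s=\sum_i 2^{in}\psi(2^i)^s$. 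Hence divergence of one sum gives divergence of the other, and Theorem \ref{thm_main} yields $\cH^s(W_A(\psi))=\cH^s([0,1]^m)$.

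For the convergence part, assume $\sum_{q\geq 1} q^{n-1}\psi(q)^s<\infty$; here no assumption on $A$ is needed. I would realize $W_A(\psi)$ as a $\limsup$ set: for each $\mb q\in\bZ^n\setminus\{0\}$ and each $\mb p\in\bZ^m$ with the relevant coordinates in range, consider the ball $B(\mb q,\mb p)$ of points $\mb b\in[0,1]^m$ with $\|A\mb q-\mb b-\mb p\|<\psi(\|\mb q\|)$, which has diameter $\asymp \psi(\|\mb q\|)$. Then $W_A(\psi)=\limsup_{\|\mb q\|\to\infty}\bigcup_{\mb p} B(\mb q,\mb p)$. For a fixed $\mb q$ there are $O(1)$ relevant $\mb p$ (the balls live in the unit cube), and the number of $\mb q$ with $\|\mb q\|\le Q$ is $O(Q^n)$, so grouping by $\|\mb q\|=q$ gives $O(q^{n-1})$ vectors $\mb q$ with $\|\mb q\|$ in a dyadic-type shell. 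The Hausdorff--Cantelli lemma (e.g. as in \cite{BV06} or \cite{BBDV09}) then gives $\cH^s(W_A(\psi))=0$ provided $\sum_{\mb q} \bigl(\operatorname{diam} B(\mb q,\mb p)\bigr)^s<\infty$, and the left side is $\asymp \sum_{\mb q} \psi(\|\mb q\|)^s \asymp \sum_{q\geq 1} q^{n-1}\psi(q)^s<\infty$. This proves $\cH^s(W_A(\psi))=0$ in the convergence regime for all $A$.

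The main point requiring a little care — rather than a genuine obstacle — is bookkeeping the covering counts so that the $q^{n-1}$ weight comes out correctly, and checking that the balls $B(\mb q,\mb p)$ do have diameter of the right order (this uses that $\psi(\|\mb q\|)\le\psi(1)$ is bounded, so for large $\|\mb q\|$ the set in question is a genuine small ball rather than all of $[0,1]^m$); for the finitely many small $\mb q$ the contribution is trivially finite and irrelevant to a $\limsup$ set. One also has to note the edge cases $s=0$ and $s=m$: for $s=m$, $\cH^m$ is comparable to Lebesgue measure and the statement reduces to Theorem \ref{thm_Kur}; for $s=0$ the convergence sum is $\sum q^{n-1}<\infty$ only when $n=0$, a degenerate case, so there is nothing to check. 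Modulo these routine verifications, the corollary follows by combining Theorem \ref{thm_main} with the Hausdorff--Cantelli lemma.
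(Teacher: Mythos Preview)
Your proposal is correct and follows essentially the same route as the paper: the divergent part is deduced from Theorem~\ref{thm_main} via Remark~\ref{rem_bad}\eqref{item_bad} together with Cauchy condensation, and the convergent part is the Hausdorff--Cantelli lemma applied to the natural cover of $W_A(\psi)$ by balls of radius $\psi(\|\mb q\|)$. Your additional remarks on the edge cases $s=0$ and $s=m$ and on the role of the integer shifts $\mb p$ are harmless elaborations not present in the paper's shorter write-up.
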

\begin{proof}
The convergent part will be proved in Section \ref{sec2}.
Since the divergence and convergence of the sums
\eq{
\sum_{\ell=1}^{\infty} 2^{\ell n}\psi(2^{\ell})^s\quad\text{and}\quad \sum_{q=1}^{\infty} q^{n-1}\psi(q)^s\quad \text{coincide},
} the divergent part follows from Theorem \ref{thm_main} and Remark \ref{rem_bad} \eqref{item_bad}.
\end{proof}

We explore some counterparts of Kurzweil's theory. 
We denote by $w(A,\mb{b})$ the supremum of the real numbers $w$ for which, for arbitrarily large $X$, the inequalities
$$\|A\mb{q}-\mb{b}\|_\bZ < X^{-w} \quad\text{and}\quad\|\mb{q}\| < X$$
have an integral solution $\mb{q}\in\bZ^n$. We also denote by $\wh{w}(A)$ the supremum of the real numbers $w$ for which, for all sufficiently large $X$, the inequalities
$$\|A\mb{q}\|_\bZ < X^{-w} \quad\text{and}\quad\|\mb{q}\| < X$$
have an non-zero integral solution $\mb{q}\in\bZ^n$. 
If $\wh{w}({^tA})>\frac{m}{n}$, then by \cite[Theorem]{BL05}, for almost all $\mb{b}\in\bR^m$,
\[
w(A,\mb{b}) = \frac{1}{\wh{w}({^tA})} < \frac{n}{m}.
\] Thus for any $\del>0$, the Lebesgue measure of $\Bad_A(\del)$ is full. This is opposite to Corollary \ref{cor_bad}.
Note that if $\wh{w}({^tA})>\frac{m}{n}$, then $^{t}A$ is singular, hence $A$ is singular. So, they do not conflict with each other. 

If we consider the case $\wh{w}({^tA})=\frac{m}{n}$ and $^tA$ is singular, then we cannot deduce from \cite[Theorem]{BL05} that $\Bad_A(\del)$ is of full Lebesgue measure for any $\del>0$. We will give a certain sufficient condition for $\Bad_A(\del)$ being of full Lebesgue measure for any $\del>0$. 

If $\rk_\bZ ({^{t}A}\bZ^m + \bZ^n) < m+n$, then $\wh{w}({^tA})=\infty$, hence we may assume that $\rk_\bZ ({^{t}A}\bZ^m + \bZ^n) = m+n$. Then following \cite[Section 3]{BL05}, there exists a sequence of best approximations $(\mb{y}_k)_{k\geq 1}$ in $\bZ^m$ for $^t A$. Denote $Y_k = \|\mb{y}_k\|$ and $M_k = \|{^t A}\mb{y}_k\|_\bZ$.

\begin{thm}\label{thm_counter}
If 
\[
\sum_{k\geq 2}\max\left(\left(Y_{k}^{\frac{m}{n}}M_{k-1}\right)^{\frac{n}{m+n}}, \left(Y_{k+1}^{\frac{m}{n}}M_{k}\right)^{\frac{n}{m+n}}\right) < \infty,
\]
then for any $\del>0$, the Lebesgue measure of $\Bad_A(\del)$ is full.
\end{thm}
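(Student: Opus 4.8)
The plan is to deduce Theorem~\ref{thm_counter} from a transference inequality relating inhomogeneous approximation of $\mb{b}$ by $A$ to the best approximations $\mb{y}_k$ of $^tA$, followed by a covering argument. Since $\Bad_A(\del) = [0,1]^m \setminus W_A(\psi_\del)$, it suffices to prove that $|W_A(\psi_\del)| = 0$ for every $\del > 0$. For $\mb{q} \in \bZ^n \setminus\{0\}$ put
\[
B(\mb{q},\del) := \{\mb{b} \in [0,1]^m : \|A\mb{q} - \mb{b}\|_\bZ < \del\|\mb{q}\|^{-n/m}\},
\]
so that $W_A(\psi_\del) = \limsup_{\|\mb{q}\|\to\infty} B(\mb{q},\del)$. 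A direct Borel--Cantelli argument is hopeless, as $\sum_{\mb{q}} |B(\mb{q},\del)| \asymp \del^m \sum_{Q \ge 1} Q^{-1} = \infty$; the mechanism that saves us is that, $^tA$ being singular, the balls $B(\mb{q},\del)$ cluster near the rational hyperplanes $\{\langle \mb{y}_k, \cdot\rangle \in \bZ\}$, whose neighbourhoods are small.

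The first step is the transference lemma: \emph{if} $\mb{b} \in B(\mb{q},\del)$ then for every $k$
\[
\|\langle \mb{y}_k, \mb{b}\rangle\|_\bZ \;\le\; c_1\bigl(M_k\|\mb{q}\| + \del\, Y_k\|\mb{q}\|^{-n/m}\bigr), \qquad c_1 = c_1(m,n).
\]
To see this, choose $\mb{p} \in \bZ^m$ with $\mb{r} := A\mb{q} - \mb{b} - \mb{p}$ of norm $< \del\|\mb{q}\|^{-n/m}$, and write $^tA\mb{y}_k = \mb{z}_k + \eta_k$ with $\mb{z}_k \in \bZ^n$, $\|\eta_k\| = M_k$. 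Pairing with $\mb{y}_k$ and using $\langle \mb{z}_k, \mb{q}\rangle, \langle \mb{y}_k, \mb{p}\rangle \in \bZ$ gives $\langle \mb{y}_k, \mb{b}\rangle \equiv \langle \eta_k, \mb{q}\rangle - \langle \mb{y}_k, \mb{r}\rangle \pmod{\bZ}$, and the bound follows by the triangle inequality. The second, elementary, step records that for any nonzero $\mb{y} \in \bZ^m$ and $\theta \in (0,\tfrac12)$ one has $|\{\mb{b} \in [0,1]^m : \|\langle \mb{y},\mb{b}\rangle\|_\bZ < \theta\}| \le c_2\theta$ with $c_2$ absolute, by Fubini along a coordinate in which $\mb{y}$ has an entry of absolute value $\ge 1$.

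The heart of the argument is the choice of scales. For an index $k$ and a scale $Q$ set $g_k(Q) := M_kQ + \del Y_kQ^{-n/m}$, which is convex in $\log Q$ with minimum near $Q \asymp (\del Y_k/M_k)^{m/(m+n)}$. Assign to $\mb{y}_k$ the interval $I_k := [Q_k^-, Q_{k+1}^-]$ where $Q_k^- := (\del Y_k/M_{k-1})^{m/(m+n)}$; the standard monotonicity $Y_{k-1}\le Y_k$, $M_k \le M_{k-1}$ of best approximations (and $Y_k\to\infty$, $M_1>0$, which hold under the standing hypothesis $\rk_\bZ({^tA}\bZ^m + \bZ^n) = m+n$) gives $Q_k^- \le Q_{k+1}^-$ and $Q_k^- \to \infty$, so the $I_k$ cover $[Q_{k_0}^-, \infty)$. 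Since $g_k$ is convex in $\log Q$, $\sup_{I_k}g_k$ is attained at an endpoint, and a direct evaluation at $Q_k^-$ and at $Q_{k+1}^-$ — in each case the smaller of the two terms of $g_k$ being absorbed into the larger by the monotonicity of $(Y_k),(M_k)$ — yields
\[
\Theta_k := c_1\sup_{Q \in I_k} g_k(Q) \;\le\; 2c_1\del^{\frac{m}{m+n}}\max\!\left(\bigl(Y_k^{\frac{m}{n}}M_{k-1}\bigr)^{\frac{n}{m+n}},\ \bigl(Y_{k+1}^{\frac{m}{n}}M_k\bigr)^{\frac{n}{m+n}}\right),
\]
so $\sum_k \Theta_k < \infty$ by hypothesis. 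Finally, any $\mb{b} \in W_A(\psi_\del)$ lies in $B(\mb{q},\del)$ for $\mb{q}$ of arbitrarily large norm, hence, applying the transference lemma with the index $k$ for which $\|\mb{q}\| \in I_k$, in the slab $E_k := \{\mb{b} \in [0,1]^m : \|\langle \mb{y}_k,\mb{b}\rangle\|_\bZ \le \Theta_k\}$ for infinitely many $k$; thus $W_A(\psi_\del) \subseteq \limsup_k E_k$, and since $\Theta_k \to 0$ and $|E_k| \le c_2\Theta_k$ for large $k$, Borel--Cantelli gives $|W_A(\psi_\del)| = 0$.

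I expect the main obstacle to be precisely this last piece of bookkeeping: choosing the breakpoints $Q_k^-$ so that the series produced is literally the one in the hypothesis, and verifying at each endpoint evaluation that the non-extremal term of $g_k$ is indeed dominated — this is where the ordering of the best approximations is used and where the ``staggered'' pairings ($Y_k$ with $M_{k-1}$, and $Y_{k+1}$ with $M_k$) in the theorem's sum originate. One should also check that only finitely many $\mb{q}$ of small norm fall outside $\bigcup_{k\ge k_0}I_k$, which is harmless for a $\limsup$ set; note moreover that the finiteness of the hypothesis already forces $M_kY_{k+1}^{m/n}\to 0$, a quantitative form of singularity of $^tA$.
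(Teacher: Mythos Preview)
Your proof is correct and follows essentially the same route as the paper's: both use the transference inequality $\|\langle \mb{y}_k,\mb{b}\rangle\|_\bZ \le mY_k\|A\mb{q}-\mb{b}\|_\bZ + n\|\mb{q}\|M_k$ from \cite[Equation (16)]{BL05}, partition large $\|\mb{q}\|$ into intervals indexed by the best approximations, and conclude via Borel--Cantelli on the slabs $\{\mb{b}:\|\langle\mb{y}_k,\mb{b}\rangle\|_\bZ\le\Theta_k\}$. The only differences are cosmetic: the paper phrases it as an inclusion $B_\alpha(\{\gamma_k\})\subset\Bad_A\!\left(\frac{\alpha-n}{m}\right)$ (i.e.\ working with the complement rather than with $W_A(\psi_\del)$ directly), and it chooses the interval endpoints as $U_k=(Y_k/\gamma_k)^{m/n}$, $V_k=\gamma_k/M_k$, which makes the final inequality $\|\mb{q}\|^{n/m}\|A\mb{q}-\mb{b}\|_\bZ>\frac{\alpha-n}{m}$ come out as an exact identity rather than through your convexity/endpoint-evaluation argument.
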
 
\begin{rem}\
\begin{enumerate}
\item The summability assumption implies that $Y_{k+1}^{\frac{m}{n}}M_{k} \to 0$ as $k\to \infty$, hence $^tA$ is singular.
\item This theorem is stronger than the previous observation because if $\wh{w}({^t A}) > \frac{m}{n}$, then there is $\gamma>0$ such that $Y_{k+1}^{\frac{m}{n}+\gamma}M_k < 1$ for all sufficiently large $k\geq 1$. Hence,
\[
\sum_{k\geq 2}\max\left(\left(Y_{k}^{\frac{m}{n}}M_{k-1}\right)^{\frac{n}{m+n}}, \left(Y_{k+1}^{\frac{m}{n}}M_{k}\right)^{\frac{n}{m+n}}\right) < \sum_{k\geq 1} \left(Y_{k+1}^{-\gamma}\right)^{\frac{n}{m+n}} < \infty
\] since $Y_k$ increases at least geometrically (see \cite[Lemma 1]{BL05}).
\item It was proved in \cite{BKLR, KKL} that $A$ is \textit{singular on average} if and only if there exists $\del>0$ such that 
$\Bad_A(\del)$ has full Hausdorff dimension. Thus it seems very interesting to obtain an equivalent Diophantine property of $A$ for $\Bad_A(\del)$ being of full Lebesgue measure.
\end{enumerate}
\end{rem}
 
The structure of this paper is as follows: 
In Section \ref{sec2}, we prove the convergent part of Corollary \ref{cor_zero-one}. In Section \ref{sec3}, we introduce some preliminaries for the proof of Theorem \ref{thm_main} including ubiquitous systems, Transference Principle, and Weyl type uniformly distribution. We prove Theorem \ref{thm_main} and Theorem \ref{thm_counter} in Section \ref{sec4} and Section \ref{sec5}, respectively. 
 
\vspace{5mm}
\tb{Acknowledgments}. 
I would like to thank Victor Beresnevich and Nikolay Moshchevitin for providing helpful comments.

\section{Convergent part: a warm up}\label{sec2}
In this section, we prove the convergent part of Corollary \ref{cor_zero-one}. We will use the following Hausdorff measure version of the Borel-Cantelli lemma \cite[Lemma 3.10]{BD99}. 
\begin{lem}[Hausdorff-Cantelli]\label{hcan}
Let $\{B_i\}_{i\geq1}$ be a sequence of subsets in $\bR^m$. For any $0\leq s\leq k$,
\eq{
\cH^{s}(\limsup_{i\to\infty}B_i)=0\quad\text{if}\quad\sum_{i}\textup{diam}(B_i)^s< {\infty}.
}
\end{lem}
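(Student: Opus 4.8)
The plan is to unwind the definition of $\limsup$ together with the definition of $s$-dimensional Hausdorff measure, and to exploit that the tail sums of a convergent series are small. Write $E:=\limsup_{i\to\infty}B_i=\bigcap_{N\geq 1}\bigcup_{i\geq N}B_i$, so that for every $N\geq 1$ the countable family $\{B_i\}_{i\geq N}$ covers $E$. Put $t_N:=\sum_{i\geq N}\diam(B_i)^s$; the hypothesis $\sum_i\diam(B_i)^s<\infty$ gives $t_N\to 0$ as $N\to\infty$.

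First I would dispose of the case $s>0$. Here $\sum_i\diam(B_i)^s<\infty$ forces $\diam(B_i)^s\to 0$, hence $\diam(B_i)\to 0$. Thus, given an arbitrary $\rho>0$, we may choose $N_0$ with $\diam(B_i)<\rho$ for all $i\geq N_0$; then for every $N\geq N_0$ the cover $\{B_i\}_{i\geq N}$ of $E$ uses only sets of diameter less than $\rho$, so by the definition of the restricted Hausdorff content $\cH^s_\rho(E)\leq\sum_{i\geq N}\diam(B_i)^s=t_N$. Letting $N\to\infty$ yields $\cH^s_\rho(E)=0$, and since $\rho>0$ was arbitrary we conclude $\cH^s(E)=\sup_{\rho>0}\cH^s_\rho(E)=0$.

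For the boundary case $s=0$, convergence of $\sum_i\diam(B_i)^0$ (with the usual convention $\diam(\emptyset)^0=0$ and $\diam(B_i)^0=1$ for $B_i\neq\emptyset$) means that only finitely many of the $B_i$ are nonempty; hence $\bigcup_{i\geq N}B_i=\emptyset$ for all large $N$, so $E=\emptyset$ and $\cH^0(E)=0$. This settles the full range $0\leq s\leq m$ (the symbol $k$ in the displayed hypothesis should read $m$).

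There is no genuine difficulty in this argument; the only point deserving attention is that, in order to bound the restricted content $\cH^s_\rho(E)$ rather than merely the unrestricted content, the covering family must consist of sets of diameter below the prescribed scale $\rho$ — which is precisely why one needs $\diam(B_i)\to 0$, and this is automatic from summability once $s>0$.
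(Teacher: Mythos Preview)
Your argument is correct and is the standard proof of the Hausdorff--Cantelli lemma: cover the $\limsup$ set by the tail $\{B_i\}_{i\ge N}$, use summability both to push the diameters below any prescribed scale $\rho$ (when $s>0$) and to make the tail $s$-content vanish. The paper does not actually prove this lemma; it simply quotes it as \cite[Lemma~3.10]{BD99}, so there is no alternative approach to compare against.
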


Note that
\[
W_A(\psi)=\limsup_{\|\mb{q}\|\to\infty} B(A\mb{q}, \psi(\|\mb{q}\|)),
\]
where $B(A\mb{q}, \psi(\|\mb{q})\|)$ denotes the ball in $\bR^m$ of radius $\psi(\|\mb{q}\|)$ centered at $A\mb{q}$ modulo $1$,
and $$\sum_{\mb{q}\in\bZ^n} \diam(B(A\mb{q}, \psi(\|\mb{q}\|)))^s < \infty \iff \sum_{q=1}^{\infty} q^{n-1}\psi(q)^s<\infty.$$
Hence, using Hausdorff-Cantelli lemma, we have that for any $0\leq s\leq m$,
\eq{
\cH^s(W_A(\psi)) = 0 \quad\text{if}\quad \sum_{q=1}^{\infty} q^{n-1}\psi(q)^s<\infty.
} This proves the convergent part of Corollary \ref{cor_zero-one}.

\section{Preliminaries for divergent part}\label{sec3}
\subsection{Ubiquity systems}\label{subsec3.1}
The proof of Theorem \ref{thm_main} is based on the ubiquity framework developed in \cite{BDV06}, which provides a very general and abstract approach for establishing the Hausdorff measure of a large class of limsup sets. 
In this subsection, we set up ubiquitous systems that suits our situation.

We consider $\bT^m$ with the supremum norm $\|\cdot\|$. With notation in \cite{BDV06}
we set up the following: 
\[J= \{\mb{q}\in \bZ^{n} \},\quad R_{\mb{q}}=A\mb{q}\in\bT^m, \quad \cR=\{R_{\mb{q}}:\mb{q}\in J\},\quad\beta_{\mb{q}}= \|\mb{q}\|.
\]
Let $l = \{l_i\}$ and $u=\{u_i\}$ be positive increasing sequences such that
\[
l_i < u_i  \quad\text{and}\quad \lim_{i\to\infty} l_i =\infty.
\]
For a decreasing function $\psi :\bR^{+}\to\bR^{+}$, we define
\[
\Del_{l}^{u}(\psi,i):=\bigcup_{\mb{q}\in\bZ^n : l_i<\|\mb{q}\|\leq u_i} B(A\mb{q},\psi(\|\mb{q}\|)).
\]
It follows that
\[
W_A(\psi)=\limsup_{i\to\infty}\Del_l^u(\psi,i):=\bigcap_{M=1}^\infty \bigcup_{i=M}^\infty \Del_l^u(\psi,i).
\]

Throughout, $\rho : \bR^{+}\to\bR^{+}$ will denote a function satisfying $\lim_{r\to\infty}\rho(r)=0$ and is referred to as the \textit{ubiquitous function}. Let
\eq{
\Del_l^u(\rho,i):= \bigcup_{\mb{q}\in\bZ^n : l_i<\|\mb{q}\|\leq u_i} B(A\mb{q},\rho(u_i)).
}

\begin{defi}[Local ubiquity]
Let $B$ be an arbitrary ball in $\bT^{m}$. Suppose that there exist a ubiquitous function $\rho$ and an absolute constant $\ka>0$ such that
\eqlabel{LoUb}{
|B\cap \Del_l^u(\rho,i)| \geq \ka |B|\quad\text{for } i\geq i_{0}(B).
}
Then the pair $(\cR,\beta)$ is said to be a \textit{locally ubiquitous} system relative to $(\rho,l,u)$. 
\end{defi}

Finally, a function $h$ is said  to be \textit{$u$-regular} if there exists a positive constant $\lam<1$ such that for $i$ sufficiently large 
\eq{
h(u_{i+1})\leq \lam h(u_{i}).
}

With notation in \cite{BDV06}, the Lebesgue measure on $\bT^m$ is of type (M2) with $\de=m$ and the intersection conditions are also satisfied with $\ga=0$. These conditions are not stated here but these extra conditions exist and need to be established for the more abstract ubiquity. 

Combining \cite[Corollary 2 and Corollary 4]{BDV06}, we have the following theorem.
\begin{thm}\cite{BDV06}\label{UbThm1}
Suppose that $(\cR,\beta)$ is a local ubiquitous system relative to $(\rho,l,u)$ and assume further that $\rho$ is $u$-regular.
Then for any $0\leq s\leq m$
\eq{
\cH^{s}(W_A(\psi))=\cH^{s}(\bT^m)\quad \text{if}\quad \sum_{i=1}^{\infty}\frac{\psi(u_i)^{s}}{\rho(u_i)^{m}}=\infty.
}
\end{thm}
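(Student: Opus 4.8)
The plan is to obtain the statement by fitting the data assembled in this subsection into the abstract ubiquity framework of \cite{BDV06} and then invoking \cite[Corollary 2 and Corollary 4]{BDV06} essentially verbatim; there is no new argument here, only a verification that the hypotheses of that framework hold in the present setting.

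First I would fix the dictionary with \cite{BDV06}. The ambient space is $\Om=\bT^m$ with the sup-metric and $\mu$ the Lebesgue measure; since $\mu$ is Ahlfors $m$-regular, i.e. $\mu(B(x,r))\asymp r^m$ uniformly for $x\in\bT^m$ and $0<r<1$, it is a measure of type (M2) with $\de=m$, and for the same reason the intersection conditions of \cite{BDV06} hold with $\ga=0$ (no lower-dimensional degeneracy can occur on the torus); these are the ``extra conditions'' alluded to above and I would check them explicitly. The resonant set is $\cR=\{A\mb{q}:\mb{q}\in\bZ^n\}\subset\bT^m$ with ubiquity weight $\beta_{\mb{q}}=\|\mb{q}\|$, the approximating function is $\mb{q}\mapsto\psi(\|\mb{q}\|)$, and the limsup set studied in \cite{BDV06} is exactly $W_A(\psi)=\limsup_i\Del_l^u(\psi,i)$. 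With these identifications, local ubiquity of $(\cR,\beta)$ relative to $(\rho,l,u)$ is precisely the hypothesis \eqref{LoUb}.

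Given this, \cite[Corollary 2]{BDV06} yields $\mu(W_A(\psi))=\mu(\bT^m)$, equivalently $\cH^m(W_A(\psi))=\cH^m(\bT^m)$, whenever $\sum_i\bigl(\psi(u_i)/\rho(u_i)\bigr)^m=\infty$, and \cite[Corollary 4]{BDV06} — the refinement available once $\rho$ is assumed $u$-regular — upgrades this to $\cH^s(W_A(\psi))=\cH^s(\bT^m)$ for every $0\le s\le m$ under divergence of $\sum_i\psi(u_i)^s\rho(u_i)^{-m}$. Combining the two (the case $s=m$ coming from the first corollary, the range $0\le s<m$ from the second) gives the asserted conclusion. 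The only point requiring genuine care is bookkeeping: one must check that the series appearing in \cite[Corollary 4]{BDV06}, written there in terms of their $\rho$, their scale points, and the exponent $\de$, becomes $\sum_i\psi(u_i)^s/\rho(u_i)^m$ after the substitution $\de=m$ and the dictionary above, and that monotonicity of $\psi$ together with $l_i<u_i$ and $l_i\to\infty$ supplies the mild regularity on the approximating function that \cite{BDV06} requires. I expect this notational matching, rather than any substantive step, to be the main obstacle.
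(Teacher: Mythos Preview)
Your proposal is correct and matches the paper's approach exactly: the paper does not give an independent proof of this theorem but simply records the dictionary with \cite{BDV06}, notes that Lebesgue measure on $\bT^m$ is of type (M2) with $\de=m$ and that the intersection conditions hold with $\ga=0$, and then states the result as a direct combination of \cite[Corollary~2 and Corollary~4]{BDV06}. Your write-up is in fact more explicit than the paper's, which leaves the verification of the extra conditions to the reader.
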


\subsection{Transference principle}
We need the following transference principle between homogeneous and inhomogeneous Diophantine approximation. See 
\cite[Chapter \rom{5}, Theorem \rom{6}]{Cas57}).
\begin{thm}[Transference principle]\label{thm_trasf}
Suppose that there is no solution $\mb{q}\in\bZ^n\setminus\{0\}$ such that
\[
\|A\mb{q}\|_\bZ < C \quad\text{and}\quad \|\mb{q}\|<X.
\]
Then for any $\mb{b}\in\bR^m$, there exists $\mb{q}\in \bZ^n$ such that
\[
\|A\mb{q}-\mb{b}\|_\bZ \leq C_1 \quad\text{and}\quad \|\mb{q}\|\leq X_1,
\] where 
\[
C_1=\frac{1}{2}(h+1)C,\quad X_1 =\frac{1}{2}(h+1)X,\quad h=X^{-n}C^{-m}.
\]
\end{thm}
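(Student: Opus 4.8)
\textbf{Proof strategy for Theorem \ref{thm_main}.}

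The plan is to verify the hypotheses of the abstract ubiquity theorem (Theorem \ref{UbThm1}) with a carefully chosen pair of sequences $l = \{l_i\}$, $u = \{u_i\}$ built from the $\eps$-return sequence $L(\eps) = \{\ell_i\}_{i\geq 1}$, and a ubiquitous function $\rho$ dictated by the Transference Principle (Theorem \ref{thm_trasf}). Concretely, I would set $u_i = 2^{\ell_i}$ and $l_i = 2^{\ell_i - 1}$ (or more generally $l_i = c\, u_i$ for a suitable constant $0 < c < 1$), so that the annulus $l_i < \|\mb{q}\| \leq u_i$ still captures the essential lattice points at scale $2^{\ell_i}$. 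Because $\ell_i \in L(\eps)$, there is \emph{no} nonzero $\mb{q}\in\bZ^n$ with $\|A\mb{q}\|_\bZ < \eps 2^{-\frac{n}{m}\ell_i}$ and $\|\mb{q}\| < 2^{\ell_i}$. Feeding $C = \eps 2^{-\frac{n}{m}\ell_i}$ and $X = 2^{\ell_i}$ into Theorem \ref{thm_trasf} gives $h = X^{-n}C^{-m} = \eps^{-m}$, a constant independent of $i$, so that for \emph{every} $\mb{b}\in\bR^m$ there is $\mb{q}\in\bZ^n$ with $\|A\mb{q} - \mb{b}\|_\bZ \leq C_1 = \tfrac12(\eps^{-m}+1)\eps\, 2^{-\frac{n}{m}\ell_i}$ and $\|\mb{q}\| \leq X_1 = \tfrac12(\eps^{-m}+1)2^{\ell_i}$. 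Choosing the constant $c$ in $l_i = c u_i$ just below $\tfrac{1}{\eps^{-m}+1}$, and defining $\rho(u_i) := \tfrac12(\eps^{-m}+1)\, \eps\, 2^{-\frac{n}{m}\ell_i} = C_0\, u_i^{-n/m}$ for an explicit constant $C_0 = C_0(\eps, m, n)$, this says exactly that every point of $\bT^m$ lies within $\rho(u_i)$ of some $A\mb{q}$ with $l_i < \|\mb{q}\| \leq u_i$ — i.e.\ $\Del_l^u(\rho, i) = \bT^m$, which trivially yields the local ubiquity inequality \eqref{LoUb} with $\ka = 1$ for every ball $B$.

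Next I would check that $\rho$ is $u$-regular. Since $L(\eps) = \{\ell_i\}$ is strictly increasing in $\bZ_{\geq 1}$, we have $\ell_{i+1} \geq \ell_i + 1$, hence $u_{i+1} = 2^{\ell_{i+1}} \geq 2\, u_i$ and therefore $\rho(u_{i+1}) = C_0\, u_{i+1}^{-n/m} \leq 2^{-n/m}\, C_0\, u_i^{-n/m} = 2^{-n/m}\rho(u_i)$, so $u$-regularity holds with $\lam = 2^{-n/m} < 1$. Also $l_i = c\,2^{\ell_i} \to \infty$ and $l_i < u_i$, as required. Finally I would identify the ubiquity divergence sum with the sum in the statement: with these choices
\[
\sum_{i=1}^{\infty} \frac{\psi(u_i)^s}{\rho(u_i)^m} = \sum_{i=1}^{\infty} \frac{\psi(2^{\ell_i})^s}{C_0^m\, 2^{-n\ell_i}} = C_0^{-m}\sum_{i=1}^{\infty} 2^{\ell_i n}\,\psi(2^{\ell_i})^s,
\]
so the hypothesis $\sum_i 2^{\ell_i n}\psi(2^{\ell_i})^s = \infty$ is precisely the divergence condition of Theorem \ref{UbThm1}. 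Applying that theorem yields $\cH^s(W_A(\psi)) = \cH^s(\bT^m) = \cH^s([0,1]^m)$, which is the claim. I should also remark that $W_A(\psi)$ as defined with $\limsup_{i\to\infty}\Del_l^u(\psi,i)$ over these sparse annuli is contained in the full $W_A(\psi)$, so proving fullness for the sparse version suffices; and one should double-check the measure-type hypotheses (M2) with $\de = m$ and the intersection condition with $\ga = 0$ for Lebesgue measure on $\bT^m$, which are standard and noted already in the text.

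The main obstacle, and the only genuinely delicate point, is bookkeeping the constants so that the Transference Principle output at scale $X = 2^{\ell_i}$ really does land inside the annulus $l_i < \|\mb{q}\| \leq u_i$ used to define $\Del_l^u(\rho,i)$ — the transference gives $\|\mb{q}\| \leq X_1 = \tfrac12(\eps^{-m}+1) 2^{\ell_i}$, which exceeds $u_i = 2^{\ell_i}$, so one must instead run the argument at the shifted scale (apply Theorem \ref{thm_trasf} with $X = 2^{\ell_i}/(\tfrac12(\eps^{-m}+1))$ rescaled appropriately, or equivalently redefine $u_i$ as a fixed multiple of $2^{\ell_i}$ and absorb the multiple into $C_0$), and then take $l_i$ small enough that the lower bound $\|\mb{q}\| > l_i$ is automatic or harmless — points $\mb{q}$ with very small norm contribute only boundedly many balls and can be discarded. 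Once the scales are aligned, everything else is a direct substitution into the cited ubiquity machinery. One should also note the harmless subtlety that $\mb{q} = 0$ could in principle be returned by the inhomogeneous transference for some $\mb{b}$ near $\bZ^m$; but replacing $\mb{b}$ by a generic point in each ball $B$, or enlarging $\rho$ by a further bounded factor, removes this, and in any case it does not affect the covering statement $\Del_l^u(\rho,i) \supseteq B$ needed for \eqref{LoUb}.
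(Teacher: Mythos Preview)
Your overall plan --- set $u_i$ to be a fixed multiple of $2^{\ell_i}$, feed the $\eps$-return scales into the Transference Principle, and then invoke the ubiquity machinery --- is exactly the architecture the paper uses in Section~\ref{sec4}. The $u$-regularity check and the identification of the divergence sums are also fine. However, there is a genuine gap in your verification of local ubiquity.

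The Transference Principle gives, for each $\mb{b}$, a vector $\mb{q}$ with $\|\mb{q}\|\le u_i$ and $\|A\mb{q}-\mb{b}\|_\bZ\le \rho(u_i)$; it gives \emph{no lower bound} on $\|\mb{q}\|$. Hence you cannot conclude $\Del_l^u(\rho,i)=\bT^m$ (and so $\ka=1$) for any choice of $l_i=c\,u_i$. Your fallback --- ``points $\mb{q}$ with very small norm contribute only boundedly many balls and can be discarded'' --- is not correct as stated: there are $\asymp l_i^n\to\infty$ such $\mb{q}$, and while the \emph{total} Lebesgue measure of $\bigcup_{\|\mb{q}\|\le l_i}B(A\mb{q},\rho(u_i))$ is $\ll c_1^n$, this global bound says nothing about $|B\cap(\cdot)|$ for a \emph{fixed} small ball $B$: without further input, all of those $A\mb{q}$ could pile up inside $B$ and swamp $|B|$. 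This is precisely the step where the paper brings in an additional ingredient you do not mention, the Weyl-type equidistribution of $\{A\mb{q}\}_{\mb{q}\in\bZ^n}$ in $\bT^m$ (Proposition~\ref{prop_dist}). That proposition gives $\#\{A\mb{q}\in 2B:\|\mb{q}\|\le l_i\}\le C\,l_i^n|B|$ for large $i$, whence
\[
\Bigl|B\cap\bigcup_{\|\mb{q}\|\le l_i}B(A\mb{q},\rho(u_i))\Bigr|\le C\,l_i^n|B|\,(2\rho(u_i))^m=(2c_2)^m C\,c_1^n\,|B|,
\]
and \emph{now} choosing $c_1$ small forces this below $\tfrac12|B|$, yielding local ubiquity with $\ka=\tfrac12$ (Theorem~\ref{LUb}). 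So the missing idea is not bookkeeping of constants but the equidistribution input; once you add it, the rest of your outline goes through exactly as in the paper.
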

This principle implies the following corollary.
\begin{cor}\label{cor_trasf}
Let $A\in M_{m,n}(\bR)$ be non-singular and let $L(\eps)=\{\ell_i\}_{i\geq 1}$ be the $\eps$-return sequence for $A$. Then for any $\mb{b}\in\bR^m$, there exists $\mb{q}\in\bZ^n$ such that
\[
\|A\mb{q}-\mb{b}\|_\bZ \leq \frac{1}{2}(\eps^{-m}+1)\eps 2^{-\frac{n}{m}\ell_i}\quad\text{and}\quad \|\mb{q}\|\leq \frac{1}{2}(\eps^{-m}+1)2^{\ell_i}.
\]
\end{cor}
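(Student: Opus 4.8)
The plan is to apply the Transference Principle (Theorem \ref{thm_trasf}) with a judicious choice of the constants $C$ and $X$ coming from the definition of the $\eps$-return sequence. Recall that by definition, for each $\ell_i\in L(\eps)$ there is no solution $\mb{q}\in\bZ^n$ to \eqref{eq_sing} with $\ell=\ell_i$; that is, there is no $\mb{q}\in\bZ^n\setminus\{0\}$ with $\|A\mb{q}\|_\bZ < \eps 2^{-\frac{n}{m}\ell_i}$ and $0<\|\mb{q}\|<2^{\ell_i}$. Hence I would set
\[
C = \eps 2^{-\frac{n}{m}\ell_i}, \qquad X = 2^{\ell_i},
\]
so that the hypothesis of Theorem \ref{thm_trasf} is met for this pair.

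Next I would compute the auxiliary quantity $h = X^{-n}C^{-m}$. With the above choices,
\[
h = \left(2^{\ell_i}\right)^{-n}\left(\eps 2^{-\frac{n}{m}\ell_i}\right)^{-m} = 2^{-\ell_i n}\cdot \eps^{-m} 2^{\ell_i n} = \eps^{-m}.
\]
The key point is that the exponents of $2$ cancel exactly, because the Dirichlet-type normalization $X^{-n} = C^m/\eps^m$ is built into the definition of singularity; this is precisely why the return-sequence formulation with base $2$ is convenient. Plugging $h = \eps^{-m}$ into the conclusion of Theorem \ref{thm_trasf} gives, for any $\mb{b}\in\bR^m$, a solution $\mb{q}\in\bZ^n$ with
\[
\|A\mb{q}-\mb{b}\|_\bZ \leq C_1 = \tfrac12(h+1)C = \tfrac12(\eps^{-m}+1)\,\eps 2^{-\frac{n}{m}\ell_i},
\qquad
\|\mb{q}\| \leq X_1 = \tfrac12(h+1)X = \tfrac12(\eps^{-m}+1)\,2^{\ell_i},
\]
which is exactly the asserted statement.

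There is essentially no obstacle here: the corollary is a direct substitution into Theorem \ref{thm_trasf}. The only point deserving a word of care is that non-singularity of $A$ guarantees that the $\eps$-return sequence $L(\eps)=\{\ell_i\}_{i\geq 1}$ is infinite (indeed unbounded) for the particular $\eps>0$ witnessing non-singularity, so that the conclusion holds for infinitely many values of $i$ — this is what makes the corollary useful later for producing infinitely many inhomogeneous approximants when building the ubiquitous system in Section \ref{sec4}. One should also note that $\mb{q}=0$ is allowed in the inhomogeneous conclusion (the transference output need not be a nonzero vector), which is harmless for the subsequent application.
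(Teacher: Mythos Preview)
Your proof is correct and matches the paper's own argument exactly: apply Theorem~\ref{thm_trasf} with $C=\eps\,2^{-\frac{n}{m}\ell_i}$ and $X=2^{\ell_i}$, whence $h=\eps^{-m}$ and the stated bounds follow. The additional remarks you make about unboundedness of $L(\eps)$ and the possibility $\mb{q}=0$ are accurate and harmless.
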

\begin{proof}
It follows directly from Theorem \ref{thm_trasf} with $C=\eps 2^{-\frac{n}{m}\ell_i}$ and $X=2^{\ell_i}$.
\end{proof}

\subsection{Weyl type uniform distribution}
In this subsection, we will show Weyl type uniform distribution result for the sequence $\left\{A\mb{q}\right\}_{\mb{q}\in\bZ^n} \subset \bT^m$. For $A\in M_{m,n}(\bR)$, Kronecker's theorem (see e.g. \cite[Chapter \rom{3}, Theorem \rom{4}]{Cas57}) asserts that the sequence $\left\{A\mb{q}\right\}_{\mb{q}\in\bZ^n}$ is dense in $\bT^m$ if and only if the subgroup
\[
G({^{t}A}):= {^{t}A}\bZ^m + \bZ^n \subset \bR^n
\] has maximal rank $m+n$ over $\bZ$.
If $A$ is non-singular, then ${^t}A$ is non-singular, hence $G({^{t}A})$ has maximal rank $m+n$ over $\bZ$. By Kronecker's theorem, the sequence $\left\{A\mb{q}\right\}_{\mb{q}\in\bZ^n}$ is dense in $\bT^m$.

But the dense result is not enough for our purpose. We need the following Weyl type uniform distribution result.
\begin{prop}\label{prop_dist}
If $G({^{t}A})$ has maximal rank $m+n$ over $\bZ$, then the sequence $\left\{A\mb{q}\right\}_{\mb{q}\in\bZ^n}$ is uniformly distributed in the following sense: for any ball $B\subset \bT^m$
\[
\frac{\#\{A\mb{q}\in B : \|\mb{q}\|\leq N\}}{\#\{\mb{q}\in\bZ^n : \|\mb{q}\|\leq N\}} \to |B| \quad\text{as}\quad N\to\infty.
\]
\end{prop}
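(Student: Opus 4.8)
The plan is to use the classical Weyl criterion for uniform distribution of sequences on the torus $\bT^m$, after reindexing the sequence $\{A\mb{q}\}_{\mb{q}\in\bZ^n}$ by ordering the lattice points $\mb{q}\in\bZ^n$ by norm. First I would recall that, since indicator functions of balls can be approximated from inside and outside by continuous (indeed smooth) functions, it suffices to prove that for every fixed nonzero $\mb{p}\in\bZ^m$ one has
\[
\frac{1}{\#\{\mb{q}\in\bZ^n:\|\mb{q}\|\leq N\}}\sum_{\mb{q}\in\bZ^n:\|\mb{q}\|\leq N} e\!\left(\langle \mb{p}, A\mb{q}\rangle\right)\longrightarrow 0\quad\text{as }N\to\infty,
\]
where $e(x)=e^{2\pi i x}$. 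Writing $\langle \mb{p},A\mb{q}\rangle = \langle {}^tA\,\mb{p},\mb{q}\rangle$, the exponential sum becomes $\sum_{\|\mb{q}\|\leq N} e(\langle {}^tA\,\mb{p},\mb{q}\rangle)$, which factors as a product over the $n$ coordinates of one-dimensional geometric sums $\sum_{|q_j|\leq N} e(q_j \theta_j)$, where $\theta=(\theta_1,\dots,\theta_n)={}^tA\,\mb{p}\in\bR^n$.

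Next I would invoke the rank hypothesis. The vector $\theta={}^tA\,\mb{p}$ lies in $G({}^tA)={}^tA\bZ^m+\bZ^n$; the statement that $G({}^tA)$ has maximal rank $m+n$ over $\bZ$ is precisely the statement that for every nonzero $\mb{p}\in\bZ^m$ the vector ${}^tA\,\mb{p}$ has at least one coordinate $\theta_j$ that is irrational (otherwise ${}^tA\,\mb{p}\in\bZ^n$ would give a nontrivial $\bZ$-relation, i.e. $({}^tA\,\mb{p},-{}^tA\,\mb{p}+{}^tA\,\mb{p})$... more precisely $(\mb{p},\ast)$ would witness rank deficiency). For that coordinate, the geometric sum satisfies $\bigl|\sum_{|q_j|\leq N} e(q_j\theta_j)\bigr|\leq \frac{1}{|\sin(\pi\theta_j)|}$, a bound independent of $N$, while for the remaining coordinates the trivial bound $2N+1$ holds. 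Hence the full sum over $\|\mb{q}\|\leq N$ is $O(N^{n-1})$, whereas the normalizing count $\#\{\mb{q}\in\bZ^n:\|\mb{q}\|\leq N\}=(2N+1)^n$ grows like $N^n$; the ratio tends to $0$. By the Weyl criterion, $\{A\mb{q}\}$ is uniformly distributed in $\bT^m$, and applying this to indicator functions of balls (sandwiching between continuous functions) yields the displayed limit.

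The main obstacle, and the only point requiring genuine care, is the translation between the rank condition on $G({}^tA)$ and the irrationality of some coordinate of ${}^tA\,\mb{p}$: one must verify that $\rk_\bZ({}^tA\bZ^m+\bZ^n)=m+n$ is equivalent to the nonexistence of a nonzero $\mb{p}\in\bZ^m$ with ${}^tA\,\mb{p}\in\bZ^n$, which is exactly the content of Kronecker's theorem already quoted in the text. A minor additional point is the passage from the Weyl exponential-sum condition (uniform distribution against continuous test functions) to the counting statement against balls; this is standard and follows from approximating $\mathbf{1}_B$ above and below by continuous functions whose integrals differ by at most an arbitrarily small $\eta>0$, using that $|\partial B|=0$.
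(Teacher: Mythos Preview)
Your proposal is correct and follows essentially the same route as the paper: both reduce to Weyl's criterion, use the transpose identity $\langle\mb{p},A\mb{q}\rangle=\langle{}^tA\mb{p},\mb{q}\rangle$, invoke the rank hypothesis to produce an irrational coordinate of ${}^tA\mb{p}$, and then bound the exponential sum over the cube $\|\mb{q}\|\le N$ by $O(N^{n-1})$ against the normalization $\asymp N^n$. Your parenthetical justifying the rank--irrationality equivalence is a bit garbled (one needs the intermediate step that if every coordinate of ${}^tA\mb{p}$ were rational then some integer multiple would lie in $\bZ^n$, giving the relation), but you correctly flag this as the only delicate point and defer to Kronecker's theorem exactly as the paper does.
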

\begin{proof}
We first claim that for any $\mb{c}\in\bZ^m\setminus\{0\}$
\[
\frac{1}{\#\{\mb{q}\in\bZ^n : \|\mb{q}\|\leq N\}}\sum_{\mb{q}\in\bZ^n : \|\mb{q}\|\leq N}e^{2\pi i (\mb{c}\cdot A\mb{q})}  \to 0 \quad \text{as}\quad N\to \infty.
\] Indeed, by the maximal rank assumption, we have ${^t}A\mb{c} \in \bR^n\setminus\bQ^n$. Without loss of generality, we may assume that the first coordinate, say $\alpha$, of ${^t}A\mb{c}$ is irrational. It follows from $\mb{c}\cdot A\mb{q}={^t}A\mb{c}\cdot\mb{q}$ that
\[\begin{split}
\frac{1}{N^n}\left|\sum_{\mb{q}\in\bZ^n : \|\mb{q}\|\leq N}e^{2\pi i (\mb{c}\cdot A\mb{q})} \right|
&= \frac{1}{N^n}\left|\sum_{\mb{q}\in\bZ^n : \|\mb{q}\|\leq N}e^{2\pi i ({{^t}A}\mb{c}\cdot \mb{q})} \right|\\
&\ll \frac{1}{N^n}N^{n-1}\sum_{q_1=-N}^N e^{2\pi i \al q_1}\\
&\leq \frac{1}{N}\frac{2}{|e^{2\pi i \al}-1|}.
\end{split}\] Since $\#\{\mb{q}\in\bZ^n : \|\mb{q}\|\leq N\}\asymp N^n$, this proves the claim.

Following the proof of classical Weyl's criterion (see e.g. \cite[Theorem 2.1]{KN74}), we can deduce that 
for any ball $B \subset \bT^m$ we have
\[
\frac{\#\{A\mb{q}\in B : \|\mb{q}\|\leq N\}}{\#\{\mb{q}\in\bZ^n : \|\mb{q}\|\leq N\}} \to |B| \quad\text{as}\quad N\to\infty.
\]
\end{proof}
\begin{rem}
The above proposition is slightly different to the multidimensional Weyl's criterion. We do not take every partial sum but ``radial'' partial sum.
\end{rem}

\section{Proof of Theorem \ref{thm_main}}\label{sec4}
Let $A$ be non-singular and let $L(\eps)=\{\ell_i\}_{i\geq 1}$ be the $\eps$-return sequence. 
With the notations in Subsection \ref{subsec3.1}, we take sequences $l=l(\eps)=\{l_i\}$ and $u=u(\eps)=\{u_i\}$ as follows:
\[
u_{i} = \frac{1}{2}(\eps^{-m}+1)2^{\ell_i}\quad\text{and}\quad l_{i}= c_1 u_i,
\]
with some positive constant $c_1=c_1(\eps)<1$, which will be determined later.

We first establish the following local ubiquity with the set-up in Subsection \ref{subsec3.1}.
\begin{thm}\label{LUb}
The pair $(\cR,\beta)$ is a locally ubiquitous system relative to $\left(\rho(r)= c_2 r^{-\frac{n}{m}},l,u\right)$ with the constant
$c_2 = \eps \left(\frac{1}{2}(\eps^{-m}+1)\right)^{1+\frac{n}{m}}$.
\end{thm}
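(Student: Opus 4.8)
The plan is to push the Transference Principle as far as it goes. The radius $\rho(u_i)$ has been chosen so that the balls $B(A\mb{q},\rho(u_i))$ with $\|\mb{q}\|\le u_i$ \emph{already} cover $\bT^m$ — this is exactly Corollary~\ref{cor_trasf} — so the only thing left to prove is that dropping the low frequencies $\|\mb{q}\|\le l_i$, which is all that separates $\bigcup_{\|\mb{q}\|\le u_i}B(A\mb{q},\rho(u_i))$ from $\Del_l^u(\rho,i)$, removes only a small fraction of any fixed ball $B$ once $c_1$ is small. The device that quantifies this is the radial equidistribution of $\{A\mb{q}\}_{\mb{q}\in\bZ^n}$ from Proposition~\ref{prop_dist}.

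First I would settle the bookkeeping. With $u_i=\tfrac12(\eps^{-m}+1)2^{\ell_i}$ and $c_2=\eps\bigl(\tfrac12(\eps^{-m}+1)\bigr)^{1+n/m}$ one computes
\[
\rho(u_i)=c_2u_i^{-n/m}=\tfrac12(\eps^{-m}+1)\,\eps\,2^{-\frac nm\ell_i},
\qquad
l_i^{\,n}\rho(u_i)^m=(c_1u_i)^n\bigl(c_2u_i^{-n/m}\bigr)^m=c_1^{\,n}c_2^{\,m}.
\]
So $\rho(u_i)$ is precisely the bound in Corollary~\ref{cor_trasf}, the product $l_i^{\,n}\rho(u_i)^m$ is a constant depending only on $\eps,m,n,c_1$, and $l_i=c_1u_i\to\infty$ since $\ell_i\to\infty$. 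Consequently the balls $B(A\mb{q},\rho(u_i))$ with $\|\mb{q}\|\le u_i$ cover $\bT^m$ up to a Lebesgue-null set, so for any ball $B=B(\mb{x}_0,r_0)\subset\bT^m$,
\[
|B|\ \le\ \bigl|B\cap\Del_l^u(\rho,i)\bigr|\ +\ \Bigl|B\cap\bigcup_{\mb{q}\in\bZ^n:\ \|\mb{q}\|\le l_i}B(A\mb{q},\rho(u_i))\Bigr|.
\]

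The crux is to make the last term small. For $i$ large enough that $\rho(u_i)\le r_0$, a ball $B(A\mb{q},\rho(u_i))$ can meet $B$ only if $A\mb{q}$ lies in the ball $2B$ of radius $2r_0$ about $\mb{x}_0$, so the last term is at most $\#\{\mb{q}\in\bZ^n:\|\mb{q}\|\le l_i,\ A\mb{q}\in 2B\}\cdot(2\rho(u_i))^m$. Because $A$ is non-singular, so is ${}^tA$, hence $G({}^tA)$ has maximal rank over $\bZ$ and Proposition~\ref{prop_dist} applies to $\{A\mb{q}\}$; combined with $l_i\to\infty$, $\#\{\mb{q}:\|\mb{q}\|\le l_i\}\asymp l_i^{\,n}$ and $|2B|\le 2^m|B|$, it gives $\#\{\mb{q}:\|\mb{q}\|\le l_i,\ A\mb{q}\in 2B\}\le C(m,n)\,|B|\,l_i^{\,n}$ for all $i\ge i_0(B)$. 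Hence the last term is at most $C'(m,n)\,l_i^{\,n}\rho(u_i)^m\,|B|=C'(m,n)\,c_1^{\,n}c_2^{\,m}\,|B|$. I would then fix $c_1=c_1(\eps)<1$ so small that $C'(m,n)\,c_1^{\,n}c_2^{\,m}\le\tfrac12$ — possible since $c_2$ depends only on $\eps,m,n$ — and conclude $\bigl|B\cap\Del_l^u(\rho,i)\bigr|\ge\tfrac12|B|$ for $i\ge i_0(B)$, which is local ubiquity relative to $(\rho,l,u)$ with $\ka=\tfrac12$.

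The main obstacle is the equidistribution step: Proposition~\ref{prop_dist} yields the counting asymptotic only along the radial partial sums and only in the limit, so one has to verify that $l_i\to\infty$ — true, because the return times $\ell_i$ tend to infinity — and accept a threshold $i_0(B)$ depending on $B$, which the definition of local ubiquity permits. Everything else is manipulation of the explicit constants, and the reason it all closes up is exactly the choice $l_i=c_1u_i$ and $\rho(r)=c_2r^{-n/m}$, which makes $l_i^{\,n}\rho(u_i)^m$ a constant.
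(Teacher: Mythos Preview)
Your proof is correct and follows essentially the same route as the paper's: Corollary~\ref{cor_trasf} shows the balls with $\|\mb{q}\|\le u_i$ cover $\bT^m$ (in fact entirely, not merely up to a null set), then Proposition~\ref{prop_dist} applied to $2B$ bounds the low-frequency contribution by $(2c_2)^m C\,c_1^{\,n}|B|$, and choosing $c_1$ small yields $\kappa=\tfrac12$. The only differences are cosmetic --- your explicit observation that $l_i^{\,n}\rho(u_i)^m=c_1^{\,n}c_2^{\,m}$ is constant and your verification that $G({}^tA)$ has full rank, both of which the paper leaves implicit.
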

\begin{proof}
Fix any ball $B=B(x,r_0)$ in $\bT^m$. 
By Corollary \ref{cor_trasf}, we have that 
\[
B=B\cap \bigcup_{\mb{q}\in\bZ^n: \|\mb{q}\|\leq u_i} B\left(A\mb{q},\frac{1}{2}(\eps^{-m}+1)\eps 2^{-\frac{n}{m}\ell_i}\right).
\]
Since $$\frac{1}{2}(\eps^{-m}+1)\eps 2^{-\frac{n}{m}\ell_i} = \eps \left(\frac{1}{2}(\eps^{-m}+1)\right)^{1+\frac{n}{m}} u_i^{-\frac{n}{m}}=\rho(u_i),$$ it follows that 
\eqlabel{Eq_Sep}{\begin{split}
|B|\leq &\left|B\cap\bigcup_{\mb{q}\in\bZ^n: \|\mb{q}\|\leq l_i} B(A\mb{q},\rho(u_i))\right|\\
&+\left|B\cap \bigcup_{\mb{q}\in\bZ^n: l_i<\|\mb{q}\|\leq u_i} B(A\mb{q},\rho(u_i))\right|
\end{split}}
Using Proposition \ref{prop_dist} with $2B=B(x,2r_0)$, 
there is an absolute constant $C>0$ independent of the choice of $B$ such that for all large enough $i\geq 1$
\[
\#\{A\mb{q}\in 2B : \|\mb{q}\|\leq l_i \} \leq Cl_i^n |B|. 
\]
Thus for all large enough $i\geq 1$ so that $\rho(u_i)<r_0$, we have
\[
\left|B\cap\bigcup_{\mb{q}\in\bZ^n: \|\mb{q}\|\leq l_i} B(A\mb{q},\rho(u_i))\right| 
\leq Cl_i^n|B| (2\rho(u_i))^m = (2c_2)^m C c_1^n |B|. 
\]
By taking $0<c_1<1$ so that $(2c_2)^m C c_1^n<\frac{1}{2}$, which depends only on $\eps$, 
it follows from \eqref{Eq_Sep} that for all large enough $i\geq 1$
\eq{\begin{split}
|B\cap \Del_l^u(\rho,i)| &=\left|B\cap \bigcup_{\mb{q}\in\bZ^n: l_i<\|\mb{q}\|\leq u_i} B(A\mb{q},\rho(u_i))\right|\\
&\geq |B|- (2c_2)^m C c_1^n |B| >\frac{1}{2}|B|.
\end{split}}
\end{proof}

\begin{proof}[Proof of Theorem \ref{thm_main}]
It follows from $\ell_{i+1} \geq \ell_{i}+1$ that for any $i\geq 1$
\[
\rho(u_{i+1})=\frac{1}{2}(\eps^{-m}+1)\eps 2^{-\frac{n}{m}\ell_{i+1}} \leq 2^{-\frac{n}{m}}\frac{1}{2}(\eps^{-m}+1)\eps 2^{-\frac{n}{m}\ell_{i}}=2^{-\frac{n}{m}}\rho(u_i),
\]
hence $\rho$ is $u$-regular.
Since the divergence and convergence of the sums
\eq{
\sum_{i=1}^{\infty} \frac{\psi(u_i)^s}{\rho(u_i)^m}\quad\text{and}\quad \sum_{i=1}^{\infty} \frac{\psi(2^{\ell_i})^s}{\rho(2^{\ell_i})^m}\quad \text{coincide},
} Theorem \ref{UbThm1} and Theorem \ref{LUb} imply Theorem \ref{thm_main}.
\end{proof}

\section{Proof of Theorem \ref{thm_counter}}\label{sec5}
In order to prove Theorem \ref{thm_counter}, we basically follow the proof of \cite[Theorem]{BL05}.

As in the introduction, let $(\mb{y}_k)_{k\geq 1}$ be a sequence of best approximations for $^t A$. Let $Y_k = \|\mb{y}_k\|$, $M_k = \|{^t A}\mb{y}_k\|_\bZ$, and
\[
\gamma_k = \max\left(\left(Y_{k}^{\frac{m}{n}}M_{k-1}\right)^{\frac{n}{m+n}}, \left(Y_{k+1}^{\frac{m}{n}}M_{k}\right)^{\frac{n}{m+n}}\right)
\]
for each $k \geq 2$.
For any $\al>0$, consider the set 
\[
B_\al(\{\gamma_k\}) = \{\mb{b}\in[0,1]^m : \|\mb{b}\cdot\mb{y}_k\|_\bZ > \al \ga_k \text{ for all large enough }k\geq 2\}.
\]
\begin{prop}\label{Prop_subset} For any $\al>0$,
\[B_\al(\{\gamma_k\})\subset \Bad_A\left(\frac{\al-n}{m}\right).\]
\end{prop}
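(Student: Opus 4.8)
The plan is a transference argument between the inhomogeneous problem for $A$ and the homogeneous best approximations $\mathbf{y}_k$ of ${}^{t}A$, in the spirit of \cite{BL05}. First one reduces to the case $\alpha>n$: if $\alpha\le n$ then $\frac{\alpha-n}{m}\le 0$, so $W_A(\psi_{(\alpha-n)/m})=\emptyset$ (interpreting $\psi_\delta(q)=\delta q^{-n/m}$) and the inclusion is trivial. So assume $\alpha>n$, fix $\mathbf{b}\in B_\alpha(\{\gamma_k\})$, and let $\mathbf{q}\in\bZ^n$ with $Q:=\|\mathbf{q}\|$. The starting point is the identity $\mathbf{b}\cdot\mathbf{y}_k=\mathbf{q}\cdot({}^{t}A\mathbf{y}_k)-(A\mathbf{q}-\mathbf{b})\cdot\mathbf{y}_k$, coming from $(A\mathbf{q})\cdot\mathbf{y}_k=\mathbf{q}\cdot({}^{t}A\mathbf{y}_k)$. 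Reducing modulo $\bZ$, using $\mathbf{y}_k\in\bZ^m$ and $\mathbf{q}\in\bZ^n$, and bounding each dot product by (number of summands)$\times$(sup-norm of the relevant fractional part)$\times$(sup-norm of the integer vector), one obtains
\[
\|\mathbf{b}\cdot\mathbf{y}_k\|_{\bZ}\le\|\mathbf{q}\cdot({}^{t}A\mathbf{y}_k)\|_{\bZ}+\|(A\mathbf{q}-\mathbf{b})\cdot\mathbf{y}_k\|_{\bZ}\le nQM_k+mY_k\|A\mathbf{q}-\mathbf{b}\|_{\bZ}.
\]
Consequently, if $k$ is large enough that the defining inequality $\|\mathbf{b}\cdot\mathbf{y}_k\|_{\bZ}>\alpha\gamma_k$ of $B_\alpha(\{\gamma_k\})$ holds, and if $QM_k\le\gamma_k$, then $\|A\mathbf{q}-\mathbf{b}\|_{\bZ}>\frac{(\alpha-n)\gamma_k}{mY_k}$; if moreover $Y_kQ^{-n/m}\le\gamma_k$, this improves to $\|A\mathbf{q}-\mathbf{b}\|_{\bZ}>\frac{\alpha-n}{m}Q^{-n/m}=\psi_{(\alpha-n)/m}(\|\mathbf{q}\|)$.

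The crux is therefore: for every sufficiently large $Q$, produce an index $k$ as large as we please with $Q\in I_k:=[(Y_k/\gamma_k)^{m/n},\,\gamma_k/M_k]$. Here the precise shape of $\gamma_k$ enters. Writing $v_k:=(Y_{k+1}^{m/n}M_k)^{n/(m+n)}$, one has $\gamma_k=\max(v_{k-1},v_k)$, so $\gamma_k\ge v_{k-1}$ and $\gamma_k\ge v_k$ for every $k$. From $\gamma_k\ge v_k\ge(Y_k^{m/n}M_k)^{n/(m+n)}$ (using $Y_{k+1}>Y_k$) one checks $I_k\ne\emptyset$; from $\gamma_k\ge v_{k-1}$ together with $\gamma_{k-1}\ge v_{k-1}$ one gets $\gamma_{k-1}\gamma_k^{m/n}\ge v_{k-1}^{(m+n)/n}=Y_k^{m/n}M_{k-1}$, which is exactly the statement that the left endpoint of $I_k$ does not exceed the right endpoint of $I_{k-1}$, so consecutive intervals overlap; and $\gamma_k/M_k\ge v_k/M_k=(Y_{k+1}/M_k)^{m/(m+n)}\to\infty$ since $Y_k\to\infty$. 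Hence $\bigcup_{k\ge k_0}I_k\supseteq[Q_0,\infty)$ for some finite $Q_0=Q_0(k_0)$.

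Combining the two parts finishes the proof: given $\mathbf{b}\in B_\alpha(\{\gamma_k\})$, pick $k_0\ge 2$ past which the $B_\alpha$ condition holds and let $Q_0=Q_0(k_0)$ be as above; for every $\mathbf{q}\in\bZ^n$ with $\|\mathbf{q}\|\ge Q_0$, choose $k\ge k_0$ with $\|\mathbf{q}\|\in I_k$, so $QM_k\le\gamma_k$ and $Y_kQ^{-n/m}\le\gamma_k$, whence $\|A\mathbf{q}-\mathbf{b}\|_{\bZ}>\psi_{(\alpha-n)/m}(\|\mathbf{q}\|)$ by the first paragraph. Thus only finitely many $\mathbf{q}\in\bZ^n$ solve $\|A\mathbf{q}-\mathbf{b}\|_{\bZ}<\psi_{(\alpha-n)/m}(\|\mathbf{q}\|)$, i.e. $\mathbf{b}\notin W_A(\psi_{(\alpha-n)/m})$, i.e. $\mathbf{b}\in\Bad_A\!\left(\frac{\alpha-n}{m}\right)$.

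I expect the one delicate point to be the interval-covering step of the second paragraph: keeping track of the exponents $m$, $n$, $\frac{m}{n}$, $\frac{n}{m+n}$ to confirm that $\{I_k\}_{k\ge k_0}$ really tiles a half-line. This is precisely what forces the two-term $\max$ in the definition of $\gamma_k$: the bound $\gamma_k\ge v_{k-1}$ controls where $I_k$ begins (against the end of $I_{k-1}$), while $\gamma_k\ge v_k$ controls both the non-emptiness of $I_k$ and the fact that $\sup I_k\to\infty$. Everything else is the routine estimate of the first paragraph, and the final measure-theoretic conclusion that $B_\alpha(\{\gamma_k\})$ is large under the hypothesis of Theorem \ref{thm_counter} will be handled separately.
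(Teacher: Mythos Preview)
Your proof is correct and follows essentially the same route as the paper: the paper defines $U_k=(Y_k/\gamma_k)^{m/n}$ and $V_k=\gamma_k/M_k$ (your $I_k=[U_k,V_k]$), verifies the same three properties ($U_k<V_k$, $U_{k+1}\le V_k$, and $V_k\to\infty$) to cover a half-line, and then invokes the identical transference inequality $\|\mb{b}\cdot\mb{y}_k\|_\bZ\le mY_k\|A\mb{q}-\mb{b}\|_\bZ+n\|\mb{q}\|M_k$ from \cite[Equation (16)]{BL05} to conclude. Your explicit treatment of the trivial case $\alpha\le n$ and your derivation of the key inequality from the identity $(A\mb{q})\cdot\mb{y}_k=\mb{q}\cdot({}^{t}A\mb{y}_k)$ simply spell out details the paper leaves to the citation.
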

\begin{proof}
Consider the following two sequences:
\[
U_k =\left( \frac{Y_k}{\gamma_k}\right)^{\frac{m}{n}}\quad\text{and}\quad V_k =\frac{\gamma_k}{M_k}.
\] 
We first claim that 
\begin{enumerate*}
\item\label{UVprop1} $V_k \to \infty$ as $k\to\infty$;
\item\label{UVprop2} $U_k < V_k$;
\item\label{UVprop3} $U_{k+1}\leq V_{k}$.
\end{enumerate*}
Indeed, \eqref{UVprop1} is clear, \eqref{UVprop2} follows from $M_k < M_{k-1}$ and $Y_k < Y_{k+1}$, and \eqref{UVprop3} follows from
\[
U_{k+1}\leq \left( \frac{Y_{k+1}}{(Y_{k+1}^{\frac{m}{n}}M_{k})^{\frac{n}{m+n}}}\right)^{\frac{m}{n}}=\frac{(Y_{k+1}^{\frac{m}{n}}M_k)^{\frac{n}{m+n}}}{M_k} \leq V_k.
\]
Hence the union $\bigcup_{k\geq2}[U_k, V_k)$ covers all sufficiently large numbers.

Now fix any $\mb{b}\in B_\al(\{\gamma_k\})$ and let $\mb{q}\in\bZ^n$ be an integral vector with sufficiently large norm.
Then we can find an index $k\geq 2$ with 
\eqlabel{Eq_UV}{
U_k \leq \|\mb{q}\| <V_k.
} Using the inequality in \cite[Equation (16)]{BL05}, we have
\[
\al \gamma_k < \|\mb{b}\cdot \mb{y}_k\|_\bZ \leq m Y_k \|A\mb{q}-\mb{b}\|_\bZ + n \|\mb{q}\| M_k, 
\] hence it follows from \eqref{Eq_UV} that 
\[
\|\mb{q}\|^{\frac{n}{m}}\|A\mb{q}-\mb{b}\|_\bZ > \frac{\al\gamma_k - n V_k M_k}{m Y_k} U_k^{\frac{n}{m}}=\frac{\al-n}{m}.
\] This proves the claim.
\end{proof}
\begin{proof}[Proof of Theorem \ref{thm_counter}]
It follows from the assumption $\sum_{k\geq 2} \ga_k <\infty$ and Borel-Cantelli lemma that $|B_\al(\{\gamma_k\})| =1$ for any $\al>0$. Given any $\del>0$, Proposition \ref{Prop_subset} with $\al=m\del+n$ implies Theorem \ref{thm_counter}.
\end{proof}

%\bibliographystyle{amsalpha}
%\bibliography{uribib}
\def\cprime{$'$} \def\cprime{$'$} \def\cprime{$'$}
\providecommand{\bysame}{\leavevmode\hbox to3em{\hrulefill}\thinspace}
\providecommand{\MR}{\relax\ifhmode\unskip\space\fi MR }
% \MRhref is called by the amsart/book/proc definition of \MR.
\providecommand{\MRhref}[2]{%
  \href{http://www.ams.org/mathscinet-getitem?mr=#1}{#2}
}
\providecommand{\href}[2]{#2}

\end{document}